\newtheorem{prethm}{{\bf Theorem}}[section]
\newenvironment{thm}{\begin{prethm}{\hspace{-0.5
em}{\bf.}}}{\end{prethm}}
\newtheorem{prepro}{{\bf Theorem}}
\newtheorem{precor}[prethm]{{\bf Corollary}}
\newenvironment{cor}{\begin{precor}{\hspace{-0.5
em}{\bf.}}}{\end{precor}}
\newtheorem{preconj}[prethm]{{\bf Conjecture}}
\newenvironment{conj}{\begin{preconj}{\hspace{-0.5
em}{\bf.}}}{\end{preconj}}
\newtheorem{preremark}[prethm]{{\bf Remark}}
\newtheorem{prelem}[prethm]{{\bf Lemma}}
\newenvironment{lem}{\begin{prelem}{\hspace{-0.5
em}{\bf.}}}{\end{prelem}}
\newtheorem{preque}[prethm]{{\bf Question}}
\newtheorem{preobserv}[prethm]{{\bf Observation}}
\newtheorem{preproposition}[prethm]{{\bf Proposition}}
\newtheorem{preproof}{{\bf Proof.}}
\newtheorem{preprooff}{{\bf Proof}}
\newenvironment{proof}[1]{\begin{preproof}{\rm
#1}\hfill{$\Box$}}{\end{preproof}}
\newenvironment{prooff}[1]{\begin{preprooff}{\rm
#1}\hfill{$\Box$}}{\end{preprooff}}
\newtheorem{preproofs}{{\bf Second proof of }}
\newtheorem{preprooft}{{\bf Third proof of }}
\newtheorem{preproofF}{{\bf Proofs of}}
\title{\bf\Large 
Spanning closed walks 
with bounded maximum degrees of
 graphs on surfaces 
}
\author{{\normalsize{\sc Morteza Hasanvand${}$} }\vspace{3mm}
\\{\footnotesize{${}$\it Department of Mathematical
 Sciences, Sharif
University of Technology, Tehran, Iran}}
{\footnotesize{}}\\{\footnotesize{   $\mathsf{hasanvand@alum.sharif.edu  }$ }}}
\date{}
\begin{document}
\maketitle
\begin{abstract}{
 Gao and Richter (1994) showed that every $3$-connected  graph which embeds on the plane or the projective plane has a spanning closed walk meeting each vertex at most $2$ times.  Brunet, Ellingham, Gao, Metzlar, and Richter (1995)   extended   this result  to the torus and Klein bottle.  Sanders and Zhao (2001)  obtained a sharp result for higher  surfaces  by proving that every  $3$-connected graph embeddable on a surface with Euler characteristic $\chi \le  -46$  admits a spanning closed walk meeting each vertex at most $\lceil \frac{6-2\chi}{3}\rceil$ times. In this paper, we develop these  results   to the remaining surfaces with Euler characteristic  $\chi \le   0$. 
\\
\\
\noindent {\small {\it Keywords}:
\\
Graphs on a surface;
spanning walk;
spanning trail;
connectivity.
}} {\small
}
\end{abstract}
%
%
%
%
%
%
%
%
%
%
%
%
%
%
\section{Introduction}
In this article, all graphs have  no  loop, but  multiple  edges are allowed and a simple graph is a graph without multiple edges.
 Let $G$ be a graph. 
The vertex set, the edge set, and the number of components of $G$ are denoted by $V(G)$, $E(G)$,  and $\omega(G)$, respectively. 
For a vertex set $S$ of  $G$, we denote by $e_G(S)$ the number of edges of $G$ with both ends in $S$.
Also, $S$ is called an {\it  independent} set, if there is no edge
of $G$ connecting vertices in $S$.
A {\it  minor} of $G$ refers to a  graph $R$ which can obtained from $G$ by contracting some vertex-disjoint connected subgraphs of $G$.
A {\it  $k$-walk} ({\it  $k$-trail}) in a graph refers to a spanning closed walk (trail)
meeting each vertex  at most $k$ times.
In  this paper, we assume that all walks use each edge at most two times.
Note that $1$-walks and $1$-trails of a graph are
equivalent to Hamilton cycles.
A graph $G$ is called {\it$m$-tree-connected}, if it has $m$ edge-disjoint spanning trees. 
It was known that the vertex set of any graph $G$ can  be expressed uniquely (up to order) as a disjoint union of vertex sets of some induced $m$-tree-connected subgraphs~\cite{MR1010580}.
These subgraphs are called the
 $m$-tree-connected components of $G$.
For a graph $G$,  we define $\Omega(G)=|P|-\frac{1}{2}e_G(P)$,  in which $P$ is the unique partition of $V(G)$
obtained from the $2$-tree-connected components of $G$ and $e_G(P)$ denotes the number of edges of $G$ joining different parts of $P$.
%
%
%
%
%
%
%

In 1956 Tutte~\cite{MR0081471} showed that every $4$-connected plane graph
admits a Hamilton cycle.
Later, Gao and Richter (1994)~\cite{MR1305052}  
proved that  every $3$-connected graph  which embeds on the plane or the projective plane admits a $2$-walk, which
 was  conjectured by Jackson and Wormald (1990)~\cite{MR1126990}.
In 1995 Brunet,
Ellingham, Gao, Metzlar, and Richter~\cite{MR1347338} extended  this result to the torus and  Klein bottle, and 
 also proposed the following conjecture.
This conjecture   
 is verified in~\cite{MR1411237, MR1644055} with linear bounds on  $k_\chi$ (but  not sharp).
\begin{conj}{\rm (\cite{MR1347338})}\label{conj:1}
For every integer $\chi$, there is a positive  integer  $k_\chi$ such that every $3$-connected graph which embeds on a surface with Euler characteristic $\chi$ admits a $k_\chi$-walk. 
\end{conj}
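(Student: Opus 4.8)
The plan is to reduce the existence of a bounded $k$-walk to the existence of a spanning tree of bounded maximum degree, and then to obtain such a tree from a purely combinatorial bound on $\omega(G-S)$ that follows from $3$-connectivity together with Euler's formula. First I would record the easy direction linking trees and walks: if a connected graph $G$ has a spanning tree $T$ with $\Delta(T)\le k$, then traversing $T$ in depth-first order and using each edge exactly twice produces a spanning closed walk that meets each vertex $v$ exactly $\deg_T(v)\le k$ times and uses every edge at most twice, i.e.\ a $k$-walk in the sense of this paper. Hence it suffices to show that a $3$-connected graph on a surface of Euler characteristic $\chi$ has a spanning tree whose maximum degree is bounded in terms of $\chi$.

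Second, I would invoke Win's theorem: a connected graph $G$ has a spanning tree of maximum degree at most $k$ whenever $\omega(G-S)\le (k-2)|S|+2$ for every $S\subseteq V(G)$. The problem thus becomes bounding $\omega(G-S)$ linearly in $|S|$, with coefficients depending only on $\chi$. This is the crux. Fix $S$ and let $C_1,\dots,C_t$ be the components of $G-S$; we may assume $t\ge 2$, the case $t\le 1$ being trivial. Contract each $C_i$ to a single vertex $c_i$, delete all edges inside $S$, and suppress multiplicities, obtaining a simple bipartite graph $B$ with parts $S$ and $\{c_1,\dots,c_t\}$. As $B$ is a minor of $G$, it embeds on the same surface. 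Since $G$ is $3$-connected and $t\ge 2$, the neighbourhood of each $C_i$ in $S$ is a separator and so has size at least $3$; hence $\deg_B(c_i)\ge 3$ and $|E(B)|\ge 3t$. The girth-four Euler bound for simple bipartite graphs, $|E(B)|\le 2|V(B)|-2\chi=2(|S|+t)-2\chi$, then gives $t\le 2|S|-2\chi$. Taking $k_\chi=4-2\chi$ makes $2|S|-2\chi\le (k_\chi-2)|S|+2$ hold for every $|S|\ge 1$ (it reduces to $\chi(|S|-1)\le 1$, which is immediate since $\chi\le 0$), so Win's condition is met and the conjecture follows with a bound linear in $\chi$.

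The only genuine subtlety in this existence argument is that one must use the bipartite (girth-four) Euler estimate rather than the generic $|E|\le 3|V|-3\chi$: the edges between the $c_i$ and $S$ form a bipartite graph, and the weaker triangle bound is useless here because those edges do not saturate it. For merely establishing $k_\chi$ this plan is essentially complete. The genuinely hard part — and the one this paper must confront — is sharpness: the crude Euler count above loses a constant factor and cannot reach the optimal $\lceil (6-2\chi)/3\rceil$. Attaining the sharp constant requires replacing the global edge count by a finer, embedding-sensitive analysis of how the components of $G-S$ attach to $S$, naturally expressed through the $2$-tree-connected components and the invariant $\Omega(G)$ introduced above.
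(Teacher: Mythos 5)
Your argument is correct for every surface with $\chi\le 0$, and it takes a genuinely different route from the paper, although the two share the same combinatorial core. The shared ingredient is the estimate $\omega(G-S)\le 2|S|-2\chi$: you contract the components of $G-S$ into single vertices, discard edges inside $S$ and multiplicities, and apply the girth-four Euler bound to the resulting simple bipartite minor whose component-side degrees are at least $3$ by $3$-connectivity; the paper performs exactly this computation with its graph $H$ in Section~\ref{sec:3}. Where you diverge is in what this estimate is fed into. You use Win's classical theorem (which needs no independence hypothesis on $S$) to produce a spanning tree of maximum degree at most $4-2\chi$, and then double the tree edges to obtain a $(4-2\chi)$-walk. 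The paper instead first passes to a minor-minimal $3$-connected bipartite counterexample via Theorem~\ref{thm:minor-minimal} --- needed precisely because its walk lemma, Lemma~\ref{lem:walk:tough} from~\cite{II}, only controls the number of visits to an \emph{independent} set --- and then exploits the stronger coefficient in that lemma: Win's condition has slope $k-2$ in $|S|$, while Lemma~\ref{lem:walk:tough} has slope $k-\frac{1}{2}$, and this gap is exactly what lets the paper push $k$ down from roughly $-2\chi$ to the sharp $\lceil\frac{6-2\chi}{3}\rceil$ (together with a separate analysis of the extremal equality case). So your route buys brevity and self-containedness --- it is essentially the route of the earlier, non-sharp verifications of the conjecture cited in the introduction~\cite{MR1411237,MR1644055} --- while the paper's route buys the optimal constant, which a tree-based argument cannot reach.

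Two small points to patch. First, the conjecture quantifies over every integer $\chi$, and your verification of Win's condition, $\chi(|S|-1)\le 1$, genuinely uses $\chi\le 0$; for the two surfaces of positive Euler characteristic (the sphere and the projective plane) you should either invoke Gao and Richter's $2$-walk theorem~\cite{MR1305052} or observe that such graphs also embed in a surface with $\chi=0$, so $k=4$ works there. Second, Win's theorem is stated for $k\ge 3$ and the Euler bound $|E(B)|\le 2|V(B)|-2\chi$ needs a word for the degenerate case where $B$ is a forest; both are harmless here since $4-2\chi\ge 4$ and $\chi\le 0$, but they should be said.
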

In 2001 Sander and Zhao~\cite{MR1805495} obtained a sharp  bound on $k_\chi$ for surfaces with small enough 
Euler characteristic   
and established the following  theorem.
In Section~\ref{sec:3},  we develop this  result   to the surfaces with Euler characteristic  $\chi \le   0$. 
Our proof  is based on a  recent result in~\cite{II} and inspired by some methods that introduced in~\cite{MR2959397,MR1805495}.
\begin{thm}{\rm (\cite{MR1805495})}\label{thm:Sanders-Zhao}
Every $3$-connected graph $G$ embeddable on a surface with Euler characteristic $\chi \le -46$
has a $\big\lceil \frac{6-2\chi}{3}\big\rceil$-walk.
\end{thm}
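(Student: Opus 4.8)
The plan is to reduce the existence of the walk to a single counting condition checkable from Euler's formula, and then to carry out that check with the sharp constant. The walk itself will be produced as an Eulerian traversal of a connected spanning even sub-multigraph $H$ of $G$ in which every edge of $G$ is used at most twice; since such a traversal visits each vertex $v$ exactly $\tfrac12\deg_H(v)$ times, it suffices to construct $H$ with $\Delta(H)\le 2\lceil\frac{6-2\chi}{3}\rceil$. I would organise the construction around the $2$-tree-connected components $P_1,\dots,P_m$ of $G$: inside each $P_i$ the two edge-disjoint spanning trees supply a spanning connected even subgraph of small maximum degree, so the only expensive degrees occur at vertices meeting many of the $e_G(P)$ inter-component edges used to stitch the pieces into one closed walk. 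Packaging exactly this construction, the recent result of \cite{II} guarantees a $k$-walk once an inequality bounding $k$ in terms of the invariant $\Omega$ of $G$ and of its contractions is met; I take \cite{II} as the engine and devote the proof to verifying its hypothesis.

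The one place the surface enters is Euler's formula. First I would record that every simple graph embeddable on the surface satisfies $|E|\le 3|V|-3\chi$, obtained from $|V|-|E|+|F|=\chi$ together with $2|E|\ge 3|F|$, and that this applies verbatim to every simple minor of $G$: in particular to the simple quotient $G/P$ obtained by contracting the $2$-tree-connected components, and to the bipartite incidence graph $B$ between a cut $S$ and the components of $G-S$. Since $B$ is bipartite it has girth at least four, so Euler gives $|E(B)|\le 2|V(B)|-2\chi$; three-connectivity forces each component of $G-S$ to send at least three edges into $S$, i.e. $3\,\omega(G-S)\le |E(B)|$, and combining the two yields the toughness-type estimate $\omega(G-S)\le 2|S|-2\chi$ for $|S|\ge 3$, while $|S|\le 2$ is trivial because $G-S$ remains connected. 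The simple-quotient bound likewise controls $e_G(P)$ and $|P|$, hence $\Omega(G)$ and the $\Omega$-values of the contractions appearing in the hypothesis of \cite{II}.

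It then remains to substitute these estimates into that hypothesis and optimise. The extremal configuration to keep in mind throughout is $K_{3,n}$ embedded on a surface with $n\approx 6-2\chi$: there a cut of size three leaves $n$ components, every spanning even subgraph forces a hub of degree at least $\tfrac{2n}{3}$ (each of the $n$ vertices of the large side needs even degree $\ge 2$, and all their edges land on three hubs), so $k\ge\frac{n}{3}=\frac{6-2\chi}{3}$; this both pins down the target constant and certifies its sharpness. The threshold $\chi\le -46$ is exactly the range in which the additive losses — the $|S|\le 2$ cases, the ceiling, and the $O(1)$ slack between the raw Euler count and the fully optimised bound — are absorbed into the leading term.

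The main obstacle is obtaining the \emph{sharp} coefficient rather than a merely linear one. A direct appeal to a Win-type spanning-tree criterion, even fed with the sharp estimate $\omega(G-S)\le 2|S|-2\chi$, only delivers $k\approx\frac{10-2\chi}{3}$, because it charges the cost of every component uniformly to all of $S$. The gain down to $\frac{6-2\chi}{3}$ comes precisely from the $2$-tree-connected decomposition: the cost must be localised to the inter-component edges counted by $e_G(P)$, not to all deleted vertices, and this is what the invariant $\Omega$ and the result of \cite{II} are designed to capture. Consequently the delicate steps are (i) the extremal analysis showing that, away from the $K_{3,n}$-type quotients, the relevant $\Omega$-inequality has genuine slack, and (ii) the careful treatment of multiple edges when passing to the simple quotient $G/P$, so that the girth-based Euler bounds are legitimately available.
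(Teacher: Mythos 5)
There is a genuine gap, and it lies in your choice of engine. The $\Omega$-based criterion from \cite{II} together with the $2$-tree-connected decomposition is the machinery this paper uses for its \emph{trail} theorem for $5$-connected graphs, where high connectivity (via Nash--Williams/Tutte) guarantees that $G-v$ is $2$-tree-connected and hence that $\Omega$ stays small. For merely $3$-connected graphs that machinery collapses at the first step: in a $3$-connected cubic graph every induced subgraph on $n\ge 5$ vertices has at most $3n/2 < 2n-2$ edges, so the $2$-tree-connected components are singletons (or tiny blocks) and $\Omega(G)\approx |V(G)|/4$; already $S=\emptyset$ violates any hypothesis of the form $\Omega(G\setminus S)\le (k-\tfrac12)|S|+1-\tfrac12 e_G(S)$. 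This failure is not an artifact of the method: a spanning closed trail in a cubic graph visits each vertex once, i.e.\ is a Hamilton cycle, and Tutte's non-Hamiltonian $3$-connected planar cubic graph shows that no trail-type or $\Omega$-type certificate can exist at connectivity $3$ --- yet these cubic graphs are exactly where the $2$-walk theorem of Gao--Richter has content. Your opening reduction (Eulerian traversal of a spanning even sub-multigraph with $\Delta\le 2k$, edges doubled at most once) is a correct characterization of $k$-walks, but the $\Omega$-lemma you invoke does not produce such a doubled-edge subgraph, so the plan cannot get off the ground.

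What the paper actually does, and what is missing from your proposal, is a structural reduction \emph{before} any counting: Theorem~\ref{thm:minor-minimal} (proved via Halin's results on minimally $3$-connected graphs) shows that a minor-minimal counterexample is bipartite with parts $X=\{v: d(v)\ge k+1\}$ and $Y=\{v: d(v)=3\}$, so that the high-degree vertices form an \emph{independent} set. Only then does the independent-set walk criterion of \cite{II} (Lemma~\ref{lem:walk:tough}) apply: for every $S\subseteq X$, either $\omega(G\setminus S)\le (k-\tfrac12)|S|+1$, or one exhibits directly a spanning closed walk meeting each vertex of $S$ at most $k$ times. Your Euler count through the bipartite incidence graph, giving $\omega(G\setminus S)\le 2|S|-2\chi$, coincides with the paper's; but note it only yields the additive constant $3/2$ rather than the required $1$, so the inequality genuinely fails in the extremal case $|S|=3$, $\omega(G\setminus S)=6-2\chi$. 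The paper resolves this not by showing ``slack away from $K_{3,n}$-type quotients'' in an $\Omega$-inequality, but by invoking the second alternative of the lemma: it routes a cycle through the three vertices of $S$ and attaches the remaining $3-2\chi$ components so that each hub acquires degree at most $\lceil (3-2\chi)/3\rceil+2$. Without the bipartite-minor reduction the high-degree vertices need not be independent and no lemma of this shape is applicable, and without the two-alternative structure of Lemma~\ref{lem:walk:tough} the extremal case is fatal; these two ideas, absent from your outline, are the heart of the proof. (The degree-$3$ side $Y$ is then harmless since $k\ge 3$, which is why the reduction suffices.)
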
 
%
In 1994
Gao and  Wormald~\cite{MR1289970} investigated trails in triangulations and derived the following theorem.
They also deduced that every $5$-connected triangulation  in the double  torus with
 representativity
at  least $6$  admits  a $4$-trail. 
In Section~\ref{sec:trails}, 
we conjecture that for each 
$\chi$, there is a positive  integer  $k_\chi$ such that every $4$-connected graph which embeds on a surface with Euler characteristic $\chi$ admits a $k_\chi$-trail, and  verify it for $5$-connected graphs.
As a consequence, we deduce that every $5$-connected graph  which embeds on  the double  torus (not necessarily triangulation) admits a $3$-trail.
\begin{thm}{\rm (\cite{MR1289970})}\label{thm:Frank,Gyarfas}
All  triangulations  in  the projective  plane,  the  torus  and  the  Klein  bottle 
have $4$-trails.
\end{thm}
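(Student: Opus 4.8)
The plan is to translate the statement into a question about bounded-degree Eulerian subgraphs and then exploit the wheel-like local structure of a triangulation. First I would record the standard reduction: a graph admits a $4$-trail precisely when it has a connected spanning subgraph $H$ in which every vertex has even degree with $2 \le \deg_H(v) \le 8$, since the Eulerian circuit of such an $H$ visits each vertex exactly $\deg_H(v)/2$ times. Thus for a triangulation $G$ of the projective plane, the torus, or the Klein bottle it suffices to produce such an $H$. I would dispose of the finitely many small triangulations and of multiple edges directly, so that $G$ may be assumed $3$-connected and simple; Euler's formula then gives $|E(G)| = 3|V(G)| - 3\chi$ with $\chi \in \{0,1\}$, equivalently $\sum_v (\deg_G(v) - 6) = -6\chi \le 0$. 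This nonnegativity of the genus contribution is exactly what should make a uniform constant bound $8$ achievable, rather than a $\chi$-dependent one, and it is why all three surfaces can be treated together.

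The second step is to obtain a connected spanning even subgraph at all. Here I would use that such triangulations are, up to a controlled defect, $2$-tree-connected: by the Nash--Williams--Tutte criterion it is enough that every partition $P$ of $V(G)$ has at least $2(|P|-1)$ cross edges, which one tries to deduce from $3$-connectivity together with the density $|E(G)| \ge 3|V(G)| - 3$. Where a bounded number of low-degree configurations obstruct this, I would isolate them through the invariant $\Omega(G)$ attached to the $2$-tree-connected component partition and handle them separately. Since a graph with two edge-disjoint spanning trees is supereulerian, the $2$-tree-connected part of $G$ carries a connected spanning even subgraph $H_0$; the catch is that $H_0$ need not have bounded maximum degree.

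The heart of the argument, and the main obstacle, is to drive the maximum degree down to $8$ while preserving connectivity and all parities. For a vertex $v$ with $\deg_{H_0}(v) > 8$ I would use that the link of $v$ in the triangulation is a cycle $L_v$ whose edges all lie in $G$: two spokes $vu_i$ and $vu_j$ incident with $v$ in $H_0$ can be deleted and replaced by the arc of $L_v$ joining $u_i$ to $u_j$. This lowers $\deg(v)$ by $2$ and keeps every degree even, because each endpoint $u_i, u_j$ loses a spoke but gains an arc-edge, and each interior vertex of the arc gains two. Iterating this rerouting through successive links should reduce all degrees to at most $8$; the delicate point is that each reroute raises the degrees along the arc and can threaten connectivity of $v$ to the rest of $H_0$, so the hard part will be a global, amortized potential argument guaranteeing that the process terminates without recreating high-degree vertices or disconnecting $H$. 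This is precisely where the budget $\sum_v(\deg_G(v)-6) \le 0$ is spent. Finally, since the resulting $H$ remains connected and spanning, every degree is at least $2$, so $2 \le \deg_H(v) \le 8$, and the Eulerian circuit of $H$ is the desired $4$-trail.
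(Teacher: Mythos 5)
A point of reference first: the paper you are being measured against does not prove this statement at all --- it is quoted from Gao and Wormald \cite{MR1289970} as background --- so your attempt has to stand entirely on its own. Its first two steps do stand. A $4$-trail is exactly an Eulerian circuit of a connected spanning subgraph $H$ with all degrees even and $2\le d_H(v)\le 8$, and a connected spanning even subgraph of such a triangulation exists: one can verify the Nash--Williams--Tutte partition condition for two edge-disjoint spanning trees using $e_G(V_i)\le 3|V_i|-6$ for parts inducing planar subgraphs together with additivity of Euler genus over disjoint subgraphs (plus $3$-edge-connectivity for partitions with few parts), and a graph with two edge-disjoint spanning trees has a spanning closed trail.

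The genuine gap is your third step, which is precisely where the entire content of Gao and Wormald's theorem lies. You describe a local rerouting move and then announce that ``a global, amortized potential argument'' will make it terminate; that argument is never given, and it is not a routine completion. Concretely: (i) the move is not well defined when edges of the arc of $L_v$ already lie in $H$; one must pass to symmetric differences, and then interior arc vertices can \emph{lose} degree rather than ``gain two,'' so your parity bookkeeping survives but your degree bookkeeping does not; (ii) each reroute raises the degrees of all interior arc vertices by $2$, so it can create new vertices of degree exceeding $8$, and a later reroute at such a vertex can re-increase $d_H(v)$ --- with no potential function exhibited, nothing excludes the process cycling forever; (iii) connectivity is not automatic: after deleting $vu_i$ and $vu_j$, the added arc reconnects $u_i$ with $u_j$, but unless the arc is chosen to pass through a neighbour that $v$ retains in $H$, the component of $v$ can become separated from theirs, and choosing the ``long'' arc to guarantee this inflates exactly the degrees you are trying to control; and (iv) the ``budget'' $\sum_v(\deg_G(v)-6)=-6\chi\le 0$ is a statement about $G$ alone --- you never tie it to the rerouting dynamics by any inequality, so it is not actually ``spent'' anywhere. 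In short, the proposal is a correct reduction plus a plausible plan, but the degree-reduction argument --- which is the theorem --- is missing.
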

%
%
%
%
\section{Minor minimal $3$-connected graphs having no $k$-walks}
The following theorem  is inspired by Theorem~5 in~\cite{MR2959397} and provides a common improvement for Lemmas~3.1 and~3.2 in~\cite{MR1805495}.
We will apply it in the next section.
\begin{thm}\label{thm:minor-minimal}
{Let $G$ be a $3$-connected graph and let $k$ be an integer with $k\ge 3$.
If  $G$ has no  $k$-walks, then 
$G$ contains a minor $3$-connected   bipartite graph  $R$ with the bipartition $(X,Y)$ with the following properties:
\begin{enumerate}{
\item   $R$ has no  $k$-walks.
\item  $X=\{v\in V(G):d_R(v)\ge k+1\}$.
\item$Y=\{v\in V(G):d_R(v)=3\}$.
}\end{enumerate}
}\end{thm}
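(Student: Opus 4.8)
The plan is to argue by minor‑minimality. Among all minors of $G$ that are $3$‑connected and admit no $k$‑walk, let $R$ be one that is minimal in the minor order; such an $R$ exists because $G$ itself qualifies. Two reduction moves are available. Deleting an edge can never create a $k$‑walk, so if $R-e$ were $3$‑connected for some edge $e$ it would again be a $3$‑connected minor with no $k$‑walk, contradicting minimality; hence $R$ is minimally $3$‑connected and in particular $\delta(R)\ge 3$. By the theorem of Halin and Mader on minimally $3$‑connected graphs, every cycle of $R$ contains a vertex of degree $3$, so the vertices of degree at least $4$ induce a forest. The second move is edge contraction, governed by the following lifting statement, which is the engine of the argument.

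Lifting Lemma: if $e=uv$ is an edge of a graph $F$ with $d_F(u)\le k$ and $d_F(v)\le k$, then $F/e$ has a $k$‑walk only if $F$ does. To prove this I would use the Eulerian description of $k$‑walks: a graph has a $k$‑walk iff it has a connected spanning sub‑multigraph (each host edge used at most twice) with all degrees even and maximum degree at most $2k$, since the Eulerian circuit of such a sub‑multigraph visits each vertex half its degree, hence at most $k$, times. Given such a witness $H'$ in $F/e$, I would split the contracted vertex back into $u$ and $v$, assigning each $H'$‑edge at that vertex to whichever of $u,v$ it actually meets in $F$, and adding $m\in\{0,1,2\}$ copies of $uv$ to repair parity and reconnect the two sides. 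Writing $p_u,p_v$ for the numbers of edges so assigned, one has $p_u\le 2(d_F(u)-1)\le 2(k-1)$ and likewise $p_v\le 2(k-1)$, so both new degrees are at most $2(k-1)+2=2k$, while parity and connectivity fix the admissible $m$. The bound genuinely needs both endpoints of degree at most $k$: if one endpoint is a hub, the split can force more than $k$ visits there, which is precisely why contractions cannot simplify the high‑degree part.

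Combining the two moves yields the structure. By the Lifting Lemma, contracting an edge both of whose ends have degree at most $k$ cannot create a $k$‑walk, so by minimality every such edge is non‑contractible; in a $3$‑connected graph this means it lies in a $3$‑vertex‑cut $\{u,v,w\}$. I would first use this, together with minimal $3$‑connectivity and the forest property, to exclude every vertex of degree in $\{4,\dots,k\}$: such a vertex $v$ has an incident edge to a vertex of degree at most $k$, and contracting it — legitimate by the Lemma — either contradicts minimality or exposes a $3$‑separation that reduces, a contradiction (the case where all neighbours of $v$ are hubs is ruled out by the forest property and the $3$‑cut analysis). This gives conclusions~2 and~3, with $X$ the vertices of degree at least $k+1$ and $Y$ those of degree $3$. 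The same non‑contractibility forces $Y$ to be independent: an edge between two degree‑$3$ vertices has both ends of degree $3\le k$, hence lies in a $3$‑cut $\{u,v,w\}$, and the resulting local picture (each of $u,v$ having only two further neighbours) is incompatible with $3$‑connectivity unless $R$ is one of finitely many small graphs, all of which have a $k$‑walk.

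The remaining and hardest point is that $X$ is also independent, completing bipartiteness. This does not follow from minimality alone: contracting an edge inside $X$ may create a $k$‑walk, since the Lifting Lemma fails exactly at hubs, and deleting such an edge can drop a hub of degree $k+1$ to degree $k$ and destroy the dichotomy. The plan here is to invoke the certificate for the absence of a $k$‑walk supplied by the recent result in~\cite{II}, phrased through the partition into $2$‑tree‑connected components and the quantity $\Omega$, in order to choose the branch sets of the minor so that the hubs are pairwise non‑adjacent by construction: contract each low‑degree region to a single degree‑$3$ vertex meeting three distinct hubs, and verify that the bipartite quotient remains $3$‑connected and still has no $k$‑walk. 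I expect this constructive step, rather than the reductions above, to carry the main technical weight, since it is where the global obstruction certificate must be converted into the clean bipartite incidence structure asserted by the theorem.
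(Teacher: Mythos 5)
Your proposal correctly sets up minor-minimality, and your Lifting Lemma (for an edge whose \emph{both} ends have degree at most $k$) is sound as stated. But the theorem is not proved, because the step you yourself flag as the hardest --- the independence of the hub set $X$ --- is never argued. You only offer a ``plan'' to convert an obstruction certificate from \cite{II} (via $\Omega$ and $2$-tree-connected components) into a choice of branch sets; nothing in the proposal shows how such a certificate would exist, how it would dictate branch sets, or why the resulting quotient would remain $3$-connected and walk-free. This is a genuine missing idea, not a routine verification: minor-minimality gives you no leverage on hub--hub edges precisely because your Lifting Lemma fails there. The paper closes this gap by entirely different means, namely Halin's structure theory of minimally $3$-connected graphs: every edge joining two vertices of degree at least $4$ is contractible, contracting such an edge preserves minimal $3$-connectivity, and every cycle contains at least two degree-$3$ vertices (so the hubs induce a forest). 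It then contracts a whole \emph{maximal path} $P=x_0\ldots x_l$ of hubs at once, takes the $k$-walk of $G/P$, and lifts it back: the contracted vertex carries total walk-degree at most $2k$, which is redistributed among $x_0,\ldots,x_l$, and parity and connectivity are repaired by adding copies of subpaths of $y_1x_0\ldots x_ly_2$, where $y_1,y_2$ are degree-$3$ neighbours of the path's ends guaranteed by maximality; a case split on whether some $x_j$ already has walk-degree at least $2k-2$ keeps all degrees at most $2k$. Note also that \cite{II} plays no role in this theorem in the paper --- it enters only in the later application --- so your intended tool is pointed in the wrong direction.

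There is a second, smaller gap: your elimination of degree-$[4,k]$ vertices and your proof that $Y$ is independent rest on unproven assertions (``exposes a $3$-separation that reduces'', ``incompatible with $3$-connectivity unless $R$ is one of finitely many small graphs''). The clean route, which the paper takes, is Halin's lemma that in a $3$-connected graph other than $K_4$ every degree-$3$ vertex is incident with \emph{some} contractible edge, combined with the fact that in a minimally $3$-connected graph any edge between two vertices of degree at least $4$ is contractible: if $xy$ has both ends of degree at most $k$ and no edge at $x$ or $y$ were contractible, both ends would have degree exceeding $3$ and then $xy$ itself would be contractible, a contradiction. But observe that the contractible edge $yz$ this produces may have a hub at $z$, so your symmetric Lifting Lemma does not apply to it; the paper's lifting is deliberately asymmetric (it only uses the low degree of $x$, in the case where the lifted walk misses $y$ entirely, adding two copies of $xy$ rather than of $yz$). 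So even this part of your argument cannot be completed with the lemma you proved; you would need the stronger, one-sided version.
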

Let $G$ be a $3$-connected graph. 
For an edge $e \in E (G )$, define $G /e$ to be the graph obtained from $G$ by
contracting $e$.
An edge $e$ is said to be {\it contractible} if $G /e$ is still $3$-connected.
For proving Theorem~\ref{thm:minor-minimal}, 
we require the following two lemmas, which the first one is well-known.
\begin{lem}{\rm(See Halin~\cite{MR0248042})}\label{lem:Halin:1}
{Let G be a $3$-connected graph except for $K_4$. 
Then every vertex of degree $3$ in $G$ is incident with a contractible edge.
}\end{lem}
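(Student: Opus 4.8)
The plan is to argue by contradiction, using crucially that $v$ has degree exactly $3$. Write $N(v)=\{a,b,c\}$ and set $H=G-v$. Since $G$ is $3$-connected, $H$ is $2$-connected, and since $G\neq K_4$ we have $|V(G)|\ge 5$, hence $|V(H)|\ge 4$. The first step is a clean reformulation of (non)contractibility at $v$. For a neighbor $x\in\{a,b,c\}$, the edge $vx$ is non-contractible exactly when $G/vx$ has a $2$-cut; as $G$ is $3$-connected, any $2$-cut of $G/vx$ must use the contracted vertex (otherwise it would lift to a $2$-cut of $G$), so there is a vertex $x'$ with $\{v,x,x'\}$ a $3$-cut of $G$. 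Now $G-\{v,x,x'\}=H-\{x,x'\}$, and in a $3$-connected graph every component left by a $3$-cut is adjacent to all three cut vertices; being adjacent to $v$ means containing a neighbor of $v$, and since $x$ is removed the only candidates are the other two neighbors. This forces exactly two components and $x'\notin N(v)$. Thus $vx$ is non-contractible if and only if $\{x,x'\}$ is a $2$-cut of $H$ separating the other two neighbors of $v$.

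Assume, for contradiction, that all of $va,vb,vc$ are non-contractible. By the previous step I obtain $2$-cuts $\{a,a'\}$, $\{b,b'\}$, $\{c,c'\}$ of $H$ with $\{a,a'\}$ separating $b$ from $c$, with $\{b,b'\}$ separating $a$ from $c$, and with $\{c,c'\}$ separating $a$ from $b$. I would then convert each separation into a cycle: as $H$ is $2$-connected there are two internally disjoint $b$--$c$ paths, and since each must meet $\{a,a'\}$, one passes through $a$ and the other through $a'$; their union is a cycle $O_a$ through $a,a',b,c$ on which $a$ and $a'$ separate $b$ and $c$. Restricting the cuts $\{b,b'\}$ and $\{c,c'\}$ to $O_a$ then pins down the approximate cyclic positions of $b'$ and $c'$ relative to $a,a',b,c$, and doing the same for the cycles $O_b$ and $O_c$ records the full crossing pattern of the three separations.

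The heart of the matter, and the step I expect to be the main obstacle, is to show that three such mutually crossing separations cannot coexist in one $2$-connected graph. Concretely, I would analyze how $\{a,a'\}$ and $\{b,b'\}$ cross by intersecting the two sides of one cut with the two sides of the other; the submodularity of the vertex-boundary together with the $2$-connectivity of $H$ forces the four resulting ``corner'' sets into a rigid configuration that is then incompatible with the third cut $\{c,c'\}$ separating $a$ from $b$. Equivalently, one can run a minimal-fragment argument directly in $G$: among the two components produced by each of the three $3$-cuts $\{v,x,x'\}$, take one, $C$, of minimum order and cross its defining cut with a second one to exhibit a strictly smaller fragment. The delicate point, which is exactly where the degree-$3$ hypothesis must be used rather than the general contractible-edge theorem, is that the smaller fragment need not be cut off by a separator through $v$; one must therefore either set up the minimality over all $3$-cuts or treat separately the degenerate cases in which $a',b',c'$ coincide or fall on a prescribed side and a corner collapses, and in each such case deduce either a $2$-cut of $G$ or $|V(G)|\le 4$, contrary to hypothesis. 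The resulting contradiction shows that some edge incident to $v$ is contractible.
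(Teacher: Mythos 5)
The paper itself gives no proof of this lemma---it is quoted as a known result of Halin \cite{MR0248042}---so your proposal has to stand entirely on its own. Your first step is correct and cleanly done: in a $3$-connected graph on at least $5$ vertices, $vx$ is non-contractible exactly when some $x'$ makes $\{v,x,x'\}$ a separating set; and since every component of $G-\{v,x,x'\}$ must contain a neighbour of $v$, one gets exactly two components, $x'\notin N(v)$, and $\{x,x'\}$ a $2$-cut of $H=G-v$ separating the other two neighbours of $v$.

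The genuine gap is in what you yourself call the heart of the matter. The statement you plan to prove there---that three such mutually crossing separations cannot coexist in one $2$-connected graph---is simply false. Take $H$ to be the $6$-cycle with vertices in cyclic order $a,c',b,a',c,b'$. Then $\{a,a'\}$ is a $2$-cut of $H$ separating $b$ from $c$ (into exactly two components), $\{b,b'\}$ separates $a$ from $c$, and $\{c,c'\}$ separates $a$ from $b$, with $a',b',c'\notin\{a,b,c\}$. So \emph{every} condition extracted in your first step is satisfied by a $2$-connected $H$, and no amount of submodularity or corner analysis inside $H$ can reach a contradiction from those conditions alone. What saves the lemma in this example is not the structure of $H$ but the $3$-connectivity of $G$: in the graph $G$ obtained by adding $v$ joined to $a,b,c$, the vertex $c'$ has degree $2$, i.e.\ $\{a,b\}$ is a $2$-cut of $G$. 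Equivalently, $3$-connectivity of $G$ forces \emph{every} $2$-cut of $H$ (not only your three) to leave only components containing a neighbour of $v$, and this extra hypothesis is never used in your plan, yet it is indispensable. The classical Halin/Tutte-style argument works directly in $G$ for exactly this reason: one takes a fragment of minimum order among all $3$-cuts of the form $\{v,x,x'\}$, crosses it with a second such cut, and uses that in a $3$-connected graph every fragment has at least $3$ boundary vertices to produce either a strictly smaller fragment or a $2$-cut of $G$. You mention this route as an ``equivalent'' alternative, but you explicitly leave its key steps (the choice of minimality, the corner counting, and the degenerate cases) open, so as written the proposal does not contain a proof, and its primary route cannot be completed.
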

\begin{lem}{\rm(Halin~\cite{MR0248042, MR0278979})}\label{lem:Halin:2}
{Let G be a minimally $3$-connected graph and define $V_3(G)=\{v\in V(G):d_G(v)=3\}$.
Then the
following statements hold:
\begin{enumerate}{
\item[{\upshape (i)}] $V_3(G) \neq \emptyset$.
\item[{\upshape (ii)}] Every edge connecting two vertices in $V(G) \setminus  V_3 (G)$ is contractible.
\item[{\upshape (iii)}]  The graph obtained by contracting any edge connecting two vertices in $V(G)\setminus
V_3(G)$ is also minimally $3$-connected.
\item[{\upshape (iv)}] Every cycle of G contains at least two vertices of $V_3(G)$.
}\end{enumerate}
}\end{lem}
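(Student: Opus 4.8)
I would treat the four assertions as the classical structure theory of minimally $3$-connected graphs and prove them in the order (i), (ii), (iii), (iv), since each later item leans on the earlier ones. The single tool driving everything is the separation produced by minimality: for every edge $e=uv$ the graph $G-e$ fails to be $3$-connected, so $G-e$ has a $2$-cut $S_e=\{x,y\}$; as $G$ itself is $3$-connected, $G-S_e$ is connected while $(G-e)-S_e$ is not, so $e$ is a bridge of $G-S_e$. Writing $V(G)=A\cup S_e\cup B$ for the resulting partition with $u\in A$ and $v\in B$, the edge $e$ is the only edge between $A$ and $B$, and every vertex of $A\setminus\{u\}$ (resp. $B\setminus\{v\}$) has all of its neighbours inside $A\cup S_e$ (resp. $B\cup S_e$). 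I will also use the standard contraction dictionary: in a $3$-connected graph on at least five vertices, an edge $uv$ is non-contractible if and only if there is a vertex $w$ with $\{u,v,w\}$ a $3$-cut, obtained by contracting and lifting a putative $2$-cut of $G/e$.

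\textbf{Parts (i)--(iii).} For (i) I would run the minimal-fragment argument: assuming $\delta(G)\ge 4$, choose an edge $e$ and an associated separation $(A,S_e,B)$ with $|A|$ as small as possible over all edges and all their separations, and show $A=\{u\}$, whence $N(u)\subseteq S_e\cup\{v\}$ forces $d(u)=3$, a contradiction. The reduction "$|A|\ge 2$ yields a smaller fragment" is carried out by submodularity (uncrossing) of the cut function applied to an internal edge of $A$. For (ii), let $e=uv$ with $d(u),d(v)\ge 4$ and suppose $e$ is non-contractible; then $\{u,v,w\}$ is a $3$-cut, and each component of $G-\{u,v,w\}$ is adjacent to all three of $u,v,w$, since otherwise one of $\{u,v\},\{u,w\},\{v,w\}$ would be a $2$-cut of $G$. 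Passing to a smallest such component and capping it to a $3$-connected graph, the minimality of $G$ together with $d(u),d(v)\ge 4$ lets one delete an edge incident to $u$ without destroying $3$-connectivity, contradicting minimal $3$-connectivity; this gives (ii). Part (iii), asserting that $G/e$ remains minimally $3$-connected, is proved hand in hand with (ii) by the same fragment analysis, by checking that the deletion of any edge of $G/e$ reintroduces a $2$-cut.

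\textbf{Part (iv) and the main obstacle.} For (iv) I would argue by induction on $|V(G)|$ using (ii) and (iii). Given a cycle $C$ with at most one vertex of degree $3$, at least $|V(C)|-1\ge 2$ of its vertices have degree $\ge 4$, so $C$ contains an edge $e=uv$ with $d(u),d(v)\ge 4$; by (ii) and (iii), $G/e$ is minimally $3$-connected with fewer vertices, and the image of $C$ is a shorter cycle $C'$ (a digon, handled directly, when $C$ is a triangle). The inductive hypothesis gives two vertices of degree $3$ on $C'$, which I then pull back to two degree-$3$ vertices on $C$. The hard part is exactly this pull-back: contraction can lower degrees, so a degree-$3$ vertex of $C'$ need not arise from a degree-$3$ vertex of $C$. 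Precisely, only two kinds of vertices change degree, namely the merged vertex $w$, which becomes degree $3$ only when $N(u)\setminus\{v\}=N(v)\setminus\{u\}$ is a single common set of three vertices, and a common neighbour $z$ of $u$ and $v$ lying on $C$, whose degree drops by one. I expect the bulk of the work to be dispatching these configurations: when the merged vertex would be degree $3$, the rigid shared-neighbourhood structure lets one either choose a different edge of $C$ to contract or read off a forbidden $2$-cut directly, and when a common neighbour on $C$ drops to degree $3$ it creates a triangle with $uv$ that can be analysed locally. Guaranteeing that, after discarding these exceptional vertices, two genuine degree-$3$ vertices of $G$ survive on $C$ is the crux of the argument.
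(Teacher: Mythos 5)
The paper gives no proof of this lemma at all: it is imported verbatim from Halin's two 1969 papers (the citations \cite{MR0248042, MR0278979}), so your attempt can only be measured against the classical arguments, not against anything in the text. Your frame is the right one — the separation forced by minimality (for every edge $e=uv$, the graph $G-e$ has a $2$-cut $\{x,y\}$ across which $e$ is a bridge), the minimal-fragment/uncrossing proof of (i), and the dictionary between non-contractibility of $uv$ and $3$-cuts of the form $\{u,v,w\}$ — and part (i) as sketched is essentially Halin's argument. But at the two places where the lemma is actually hard, the proposal asserts rather than proves.

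In (ii), your intended contradiction is that some edge incident with $u$ becomes deletable, yet the sketch names no candidate edge and gives no reason why its deletion would leave no $2$-cut; worse, the capping step is unsound as used: replacing everything outside the smallest component and $\{u,v,w\}$ by a gadget yields a $3$-connected graph, but that graph has no reason to be \emph{minimally} $3$-connected, so ``the minimality of $G$'' does not transfer to it, and deletability in the capped graph would not imply deletability in $G$ anyway. This step is the real content of Halin's proof and it is missing. In (iv), you concede yourself that the pull-back is unresolved, and it genuinely can fail in the way you fear: both degree-$3$ vertices of $C'$ supplied by induction may be artifacts (the merged vertex, when $N(u)\setminus\{v\}=N(v)\setminus\{u\}$ has size $3$, or a common neighbour $z$ of degree $4$), and the fallback ``choose a different edge of $C$'' is not always available — a triangle with one degree-$3$ vertex has a unique admissible edge, and contracting it degenerates $C$, so triangles require an independent base case that you never supply; note moreover that the degraded-common-neighbour configuration places $u$, $v$, $z$ on a triangle, which is itself an instance of (iv), so without that base case the induction is circular. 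A smaller version of the same dependency problem sits in (iii): if $u$ and $v$ had a common neighbour, one edge of $G/e$ would correspond to two edges of $G$, and the usual way to exclude this is via (iv) (an edge joining two vertices of degree at least $4$ lies on no triangle), which inverts your declared order of proving (ii) and (iii) before (iv). In short, the proposal correctly maps the terrain and locates the crux, but the decisive steps — Halin's actual proofs of (ii) and (iv) — remain unproven.
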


\begin{prooff}{
\noindent
\hspace*{-4mm}
\textbf{ of Theorem \ref{thm:minor-minimal}.}
Let $G$ be a  counterexample with the minimum $|V(G)|$. 
We may assume that $G$ is   minimally $3$-connected
and also  $|V(G)|\ge 5$.
We here prove the following claim which was essentially
shown by Sanders and Zhao in \cite[Lemma 3.2]{MR1805495}.
%
\vspace{2mm}\\
{\bf Claim 1.} $\{v\in V(G):3\le d_G(v) \le k\}$ is independent.
Suppose otherwise  that there is an edge $xy\in E(G)$ such that $d_G(x) \le k$ and $d_G(y) \le k$.
If all edges incident with $x$ or $y$ are not contractible, then by Lemma~\ref{lem:Halin:1}, 
we must have  $d_G(x)>3$ and  $d_G(x)>3$, which contradicts Lemma~\ref{lem:Halin:2} (ii).
Thus we may assume that there is a contractible  edge incident with $y$, say $yz$ (possibly $z= x$).
By the minimality of $G$, the graph $G/yz$ has a $k$-walk $W$.
Define  $H$ to be  the  graph with the vertex set $V(G)$ having the same edges of $W$ by considering multiplicity of each edge. 
First, assume  $d_{H}(y)>0$. 
If both of $d_{H}(y)$ and $d_{H}(z)$ are odd, define $H'$ to be the graph obtained from $H$ by adding a  copy of $yz$;
otherwise,  define $H'$ to be the graph obtained from $H$ by adding two  copies of $yz$.
Next, assume  $d_{H}(y)=0$. 
In this case, define $H'$ to be the graph obtained from $H$ by adding two  copies of $xy$.
It is not difficult to check that  $H'$ is an Eulerian graph with $\Delta(H')\le 2k$. Hence $G$ admits a $k$-walk,
which is a contradiction.

We now prove the next claim.
\vspace{2mm}\\
{\bf Claim 2.}  $\{v\in V(G):d_G(v)\ge 4\}$ is independent.

Let $P=x_0x_1\ldots x_l$ be a maximal path in  the subgraph of $G$ induced by $\{v\in V(G):d_G(v)\ge 4\}$.
By Lemma~\ref{lem:Halin:2} (iv), this subgraph is a forest. 
Also,  $x_0$ and $x_l$ have degree one in it  and $x_ix_j\notin E(G)$, for any $0\le i< j\le l$ and $j\neq i+1$.
By applying Lemma~\ref{lem:Halin:2} (iii) repeatedly,  one can conclude that  $G/P$ is $3$-connected.
By the minimality of $G$, the graph $G/P$ has a $k$-walk $W$.
Define  $H$ to be  the graph with the vertex set $V(G)$ having the same edges of $W$ by considering multiplicity of each edge. 
Since $x_0$ and $x_l$ have degree one in $P$, there are two edges $x_0y_1, x_ly_2\in E(G)$ such that 
$y_1,y_2\in Y$.
Note that $\sum_{0\le i \le l}d_{H}(x_i)\le 2k$.
Let $x_j$ be a vertex of $V(P)$ with the maximum $d_{H}(x_j)$.
 If $d_{H}(x_j)\le 2k-3$,   define $H'$ to  be the graph obtained from $H$  by adding 
 some of the edges of a copy  of $P$ and adding another new copy of $P$ such that $H'$ forms an Eulerian graph. 
Otherwise, if $d_{H}(x_j) \ge 2k-2$, define  $H'$  to be the graph obtained from $H$
by adding a copy  of   the paths $y_1x_0Px_{i-1}$ (if $i\neq 1$) and $x_{i+1}Px_ly_2   $ (if $i\neq l$) 
and  adding some of the edges of $y_1Py_2$ such that $H'$ forms an Eulerian graph.
According to the construction, it is not difficult to check $\Delta(H')\le 2k+1$. Hence $G$ admits a $k$-walk, which is a contradiction again.

By the above-mentioned claims, $G$ is a  bipartite graph with the bipartition $(X,Y)$ in which 
$X=\{v\in V(G):d_R(v)\ge k+1\}$ and $Y=\{v\in V(G):d_R(v)=3\}$.
By taking $R=G$, we  derive that $G$ is not a counterexample and so the proof is completed.
}\end{prooff}
%
%
%
\section{$3$-connected graphs on surfaces}
\label{sec:3}
We shall below   develop  Theorem~\ref{thm:Sanders-Zhao} as mentioned in the Abstract.
For this purpose, we recall the following recent result from~\cite{II} that grantees the existence of walks with bounded maximum degrees
on specified independent vertex set.
\begin{lem}{\rm(\cite{II})}\label{lem:walk:tough}
{Let $G$ be a connected graph with the independent set $X\subseteq V(G)$ and 
let $k$ be a  positive integer.
Then  $G$ contains an spanning closed  walk meeting each  $v\in X$ at most $k$ times, 
if for every $S\subseteq X$, at least one of the  following  conditions holds:
\begin{enumerate}{
\item $\omega(G\setminus S)\le  (k-\frac{1}{2})|S|+1$.
\item $G$ contains an spanning closed  walk meeting each  $v\in S$ at most $k$ times.
}\end{enumerate}
}\end{lem}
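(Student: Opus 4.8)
Since the paper allows each edge to be used at most twice, a spanning closed walk meeting each $v\in X$ at most $k$ times corresponds exactly to a connected spanning subgraph $H$ of the multigraph $2G$ obtained by doubling every edge of $G$, in which every vertex has \emph{even positive} degree; here the number of times the walk meets $v$ equals $\tfrac12 d_H(v)$, so that an Euler tour of $H$ gives the walk and vice versa. Thus the goal becomes: produce a connected spanning even subgraph $H\subseteq 2G$ with $\delta(H)\ge 2$ and $d_H(v)\le 2k$ for every $v\in X$.

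First I would set up the extremal choice. Suppose no such $H$ exists, and among all connected spanning even subgraphs $H\subseteq 2G$ with $\delta(H)\ge 2$ choose one minimizing the excess $\Phi(H)=\sum_{v\in X}\max(d_H(v)-2k,0)$, breaking ties by minimizing $|E(H)|$. By assumption $\Phi(H)>0$; set $S=\{v\in X: d_H(v)>2k\}\neq\emptyset$, noting $d_H(v)$ is even and hence $d_H(v)\ge 2k+2$ for $v\in S$. Now I would apply the hypothesis to this particular $S$ and derive a contradiction in either case.

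\textbf{Case: condition 1 holds.} Contract each component of $G\setminus S$ to a point. Since $S\subseteq X$ is independent, every $H$-edge incident to $S$ runs to one of these components, so we obtain a connected even multigraph $\bar H$ on $|S|+\omega(G\setminus S)$ vertices with $|E(\bar H)|=\sum_{v\in S}d_H(v)\ge (2k+2)|S|$. Because $\omega(G\setminus S)\le (k-\tfrac12)|S|+1$, the cycle rank $|E(\bar H)|-(|S|+\omega(G\setminus S))+1$ is strictly positive and in fact large, so $\bar H$ contains a cycle through a vertex of $S$. Deleting the corresponding closed sub-walk from $H$ lowers $d_H(v)$ by $2$ at some $v\in S$ while preserving evenness; the surplus over the connectivity requirement guarantees this can be done so that $H$ stays connected and no vertex drops below degree $2$. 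This strictly decreases $\Phi(H)$ (or $|E(H)|$), contradicting minimality.

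\textbf{Case: condition 2 holds, and the main obstacle.} Here I obtain a second connected spanning even subgraph $H'\subseteq 2G$ with $d_{H'}(v)\le 2k$ for all $v\in S$, and I must merge $H$ and $H'$ into a connected spanning even $H''$ that is good on $S$ \emph{without} creating new violations on $X\setminus S$. The natural device is the symmetric difference $H\triangle H'$, which is even at every vertex and decomposes into edge-disjoint cycles; since $X$ is independent, each such cycle alternates between $X$ and $V(G)\setminus X$, which gives precise control of how toggling it changes degrees at $X$. The plan is to select a subfamily of these cycles whose toggling strictly decreases $\sum_{v\in S}\max(d_H(v)-2k,0)$ while keeping every $X$-degree at most $2k$ and $H$ connected. \emph{This simultaneous control of connectivity and of all $X$-degrees is the crux:} toggling a cycle that repairs a degree at $S$ can disconnect $H$ or push a previously admissible vertex of $X\setminus S$ above $2k$, since $H'$ is constrained only on $S$. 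I expect to overcome this by an augmenting/alternating argument that fixes one excess vertex of $S$ at a time, or by recasting the merge as a submodular-flow feasibility problem and invoking a matroid-union or Tutte-type existence result; either way the contradiction with the minimality of $\Phi(H)$ completes the proof.
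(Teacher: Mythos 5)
First, a point of order: the paper does not prove this lemma at all --- it is quoted as an external result from~\cite{II} and used as a black box --- so your attempt cannot be checked against an in-paper argument, only on its own terms. On those terms, your Eulerian reformulation is correct and standard (a spanning closed walk using each edge at most twice and meeting each $v\in X$ at most $k$ times is equivalent to a connected spanning subgraph $H$ of $2G$ with every degree even and positive and $d_H(v)\le 2k$ on $X$), and the extremal scheme (minimize the excess $\Phi$, let $S$ be the violated set, apply the hypothesis to that $S$) is a sensible skeleton. But neither branch of the dichotomy is actually closed, so what you have is a plan, not a proof.

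In the branch where condition~1 holds, the surgery step fails as stated: a cycle of the contracted graph $\bar H$ is not a cycle (closed subwalk) of $H$. Its edges are $H$-edges joining $S$ to components of $G\setminus S$, and deleting them destroys parity at the component-side endpoints unless the cycle happens to enter and leave each component at the same vertex. Repairing parity by toggling paths inside a component is not routine either, because $H$ restricted to a single component of $G\setminus S$ need not be connected (its connectivity may pass through $S$), and such toggling can push vertices of $X\setminus S$ inside that component above $2k$, creating new violations. Likewise, the assertion that the deletion ``can be done so that $H$ stays connected and no vertex drops below degree $2$'' is precisely what the cycle-rank computation does not give you: the chosen cycle may pass through degree-$2$ vertices, whose removal both disconnects $H$ and violates the minimum-degree requirement. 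In the branch where condition~2 holds, you concede the gap yourself: merging $H$ with the walk-subgraph $H'$ is named ``the crux,'' and you offer only candidate techniques (alternating arguments, submodular flows) without carrying any of them out. Note that $H'$ is completely unconstrained on $X\setminus S$, so toggling cycles of $H\triangle H'$ has no a priori reason to terminate with a net decrease of $\Phi$ while preserving connectivity; this is not a technicality but the entire difficulty. As it stands, both cases of the intended contradiction are unproven, and the lemma still rests on the proof in~\cite{II}.
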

Now, we are in a position to prove the main result of this paper.
\begin{thm}
Every $3$-connected graph $G$ embeddable on a surface with Euler characteristic $\chi \le 0$
admits a  $\big\lceil \frac{6-2\chi}{3}\big\rceil$-walk.
\end{thm}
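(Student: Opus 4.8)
The plan is to set $k=\big\lceil\frac{6-2\chi}{3}\big\rceil$ and argue by contradiction, reducing to the bipartite minor supplied by Theorem~\ref{thm:minor-minimal} and then producing a walk in it via Lemma~\ref{lem:walk:tough}. First I would dispose of the boundary value $\chi=0$ (the torus and the Klein bottle), where $k=2$ and the conclusion is exactly the theorem of Brunet, Ellingham, Gao, Metzlar and Richter~\cite{MR1347338}; so I may assume $\chi\le -1$, which gives $k\ge 3$. Suppose $G$ has no $k$-walk. Since $k\ge 3$, Theorem~\ref{thm:minor-minimal} yields a $3$-connected bipartite minor $R$ with parts $(X,Y)$, where $X=\{v:d_R(v)\ge k+1\}$ and $Y=\{v:d_R(v)=3\}$, and $R$ still has no $k$-walk. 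As a minor of $G$, the graph $R$ embeds on the same surface. The goal is now to contradict this by showing that $R$ does admit a $k$-walk. The observation that drives the reduction is that any spanning closed walk meeting every vertex of $X$ at most $k$ times is automatically a $k$-walk of $R$: each vertex of $Y$ has degree $3$, and since walks use each edge at most twice, such a vertex is met at most $3\le k$ times. Thus it suffices to produce a spanning closed walk of $R$ meeting each $v\in X$ at most $k$ times, and for this I would apply Lemma~\ref{lem:walk:tough} with the independent set $X$.

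To verify the hypothesis of Lemma~\ref{lem:walk:tough}, I must check for every $S\subseteq X$ that condition~(1) or condition~(2) holds, and the heart of the matter is an Euler-characteristic bound on $\omega(R\setminus S)$. If $|S|\le 2$, then $R\setminus S$ is connected by $3$-connectivity and condition~(1) holds trivially. If $c:=\omega(R\setminus S)\ge 2$, I would contract each component of $R\setminus S$ to a single vertex and delete parallel edges, obtaining a simple bipartite minor $H$ with parts $S$ and the $c$ contracted vertices; since $R$ is $3$-connected, each contracted vertex has at least three neighbours in $S$, so $|E(H)|\ge 3c$. As $H$ embeds on the surface and has girth at least four, the standard edge bound for girth-four graphs on a surface gives $|E(H)|\le 2|V(H)|-2\chi=2(|S|+c)-2\chi$, whence $\omega(R\setminus S)\le 2|S|-2\chi$.

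Next I would compare this with condition~(1), namely $\omega(R\setminus S)\le (k-\tfrac12)|S|+1$. A direct computation, using $k\ge\frac{6-2\chi}{3}$, shows $2|S|-2\chi\le (k-\tfrac12)|S|+1$ for all $|S|\ge 4$, and also for $|S|=3$ except in the single extremal situation where $\omega(R\setminus S)=3k$ (which forces $3\mid|\chi|$). Hence condition~(1) holds for every $S$ but this one type of set. For the exceptional set $S$, with $|S|=3$ and $\omega(R\setminus S)=3k$, I would instead verify condition~(2) by an explicit construction: here each of the $3k$ components of $R\setminus S$ has all three vertices of $S$ as neighbours, so I would assign $k$ components to each of the three pairs of vertices of $S$ and route a spanning closed walk that enters each component at one of its two assigned vertices of $S$, traverses a spanning sub-walk of that component, and leaves at the other. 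The resulting walk meets each vertex of $S$ exactly $k$ times, which is condition~(2). Either way Lemma~\ref{lem:walk:tough} applies and produces the desired walk, contradicting that $R$ has no $k$-walk.

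The main obstacle I anticipate is the extremal case $|S|=3$, $\omega(R\setminus S)=3k$, where condition~(1) fails by a hair and condition~(2) must be built by hand; the delicate point is that the choice $k=\big\lceil\frac{6-2\chi}{3}\big\rceil$ is exactly the threshold for which $3k\ge 6-2\chi$, and it is this inequality that both caps the number of components when $|S|=3$ and makes the balanced routing into the three vertices of $S$ possible, so that any smaller value of $k$ would break the argument. A secondary technical care is ensuring that the Euler bound $\omega(R\setminus S)\le 2|S|-2\chi$ is applied to a genuinely simple, girth-four minor on the surface, and that the spanning sub-walks inside the components respect the convention that each edge is used at most twice.
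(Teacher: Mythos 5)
Your proposal is correct and follows essentially the same route as the paper: reduce via Theorem~\ref{thm:minor-minimal} to the bipartite graph with parts $X$ and $Y$, apply Lemma~\ref{lem:walk:tough} to the independent set $X$, verify condition~(1) through the triangle-free Euler bound $\omega \le 2|S|-2\chi$, and settle the single extremal case $|S|=3$, $\omega = 3k = 6-2\chi$ by an explicit walk certifying condition~(2). The only (immaterial) difference is the extremal construction: you route $k$ components through each of the three pairs of vertices of $S$, while the paper takes a cycle through $S$ and attaches the remaining $3-2\chi$ components to single vertices of $S$ in a balanced way; both yield a spanning closed walk meeting each vertex of $S$ at most $k$ times.
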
 
\begin{proof}
{We may assume that  $\chi < 0$, as the assertion  was  already proved in~\cite{MR1347338} for  the surfaces 
with  Euler characteristic zero  (namely the torus and Klein bottle).
By  Theorem~\ref{thm:minor-minimal}, we may also  assume that $G$ is a bipartite graph with the bipartition $(X,Y)$ such that 
  $X=\{v\in V(G):d_G(v)\ge k+1\}$ and 
$Y=\{v\in V(G):d_G(v)=3\}$, 
where $k=\lceil \frac{6-2\chi}{3} \rceil$.
Let  $S\subseteq X$ be a vertex cut  of $G$ so that $|S|\ge 3$.
Define 
$H$ 
to be the bipartite simple graph obtained from $G$
 by contracting any component of $G\setminus S$ such that one partite set is $S$.
Note that  $H$
 can be embedded on the surface as $G$ is embedded.
Since $G$ is $3$-connected, the minimum degree of $H$ is at least $3$.
Since 
$H$
 is triangle-free, by Euler's formula, it is easy to check that   $|E(H)| \le 2|V(G)|-2\chi$.
Thus
$$3\omega(G\setminus S) \le 
|E(H)| \le
 2|V(H)|-2\chi
 = 2 \big(|S|+\omega(G\setminus S)\big)-2\chi,$$
and so 
$$\omega(G\setminus S) \le 2|S|-2\chi \le (2+\frac{-2\chi -3/2}{3})|S| +3/2
\le \big(\lceil \frac{6-2\chi}{3}\rceil-1/2\big)|S|+3/2.$$
If  the equalities hold, the we must have  $|S|=3$ and $\omega(G\setminus S)=6-2\chi$, and also 
 every component of $G\setminus S$ has exactly $3$ neighbours in $S$.
In this case, we shall show that $G$ has a spanning closed  walk meeting each $v\in S$ at most $k$ times.
Since $G$ is $3$-connected, there is a cycle $C$ of $G$ containing all three vertices of $S$.
Since $S$ is independent, the cycle $C$ contains at least one vertex of exactly three components of $G\setminus S$.
For the remaining $3-2\chi$ components of $G\setminus S$, 
join every of them to $C$ such that each vertex in $S$ has  degree at most  $\lceil (3-2\chi)/3\rceil +2$.
Finally, add the edges of any component of $G\setminus S$ to  this new graph.
Obviously, the resulting spanning subgraph of $G$ has a spanning closed  walk meeting each $v\in S$ at most $k$ times and so does $G$.
Therefore, by Lemma~\ref{lem:walk:tough},  the graph $G$  have a spanning closed  walk meeting each $v\in X$ at most $k$ times.
Since $k\ge 3$ and every vertex of $Y$ in $G$ has degree $3$, this walk forms  a $k$-walk for $G$. Hence the theorem holds.
}\end{proof}
%

%
%
\section{Graphs with higher connectivity}
\label{sec:trails}
As we already mentioned, Tutte~\cite{MR0081471} proved that every $4$-connected planar graph
admits a Hamilton cycle. Thomas and Yu~\cite{MR1290634} extended this result to the  projective plane and Gr\"unbaum~\cite{MR0266050} and independently Nash-Williams~\cite{MR0387097} conjectured that Tutte's result could be  developed to  the torus.
Motivated by Conjecture~\ref{conj:1} and  these results,
we now propose the following conjecture.
By considering the complete bipartite graph $K_{4, -\chi+4}$ (which
can be embedded on a surface with Euler characteristic $\chi$),
if  this conjecture would be true,
 then we must have  $k_\chi \ge \lceil \frac{4-\chi}{4} \rceil$.
\begin{conj}\label{conj;2}
For every integer $\chi$, there is a positive  integer  $k_\chi$ such that every $4$-connected graph 
which embeds on a surface with Euler characteristic $\chi$ admits a $k_\chi$-trail. 
\end{conj}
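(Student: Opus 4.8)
\emph{Proof idea.} The approach is to translate the trail problem into a subgraph problem and then reduce to a bounded-degree core. A closed spanning $k$-trail of $G$ is nothing but a connected spanning subgraph $H\subseteq G$, using each edge of $G$ at most once, in which every vertex has positive even degree with $d_H(v)\le 2k$; the trail then visits $v$ exactly $d_H(v)/2$ times. Thus it suffices to produce such an $H$ with a bound $k=k_\chi$ of order $|\chi|$. As a starting object, note that a $4$-connected graph is $4$-edge-connected (since $\kappa\le\lambda$), hence both $2$-tree-connected and supereulerian; in particular a spanning connected even subgraph already exists, and the only difficulty is to control its maximum degree.

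To gain degree control I would run a minor-minimal argument in the spirit of Theorem~\ref{thm:minor-minimal}, but adapted to trails and to higher connectivity. Taking a vertex-minimal $4$-connected (resp.\ $5$-connected) counterexample $G$ that is minimally so, the Halin-type theory of minimally $m$-connected graphs supplies low-degree vertices (of degree $4$, resp.\ $5$) together with contractible edges joining high-degree vertices whose contraction preserves minimal $m$-connectivity. The aim is to show, exactly as in Claims~1 and~2 of Theorem~\ref{thm:minor-minimal}, that the vertices of large degree form an independent set, so that $G$ may be assumed bipartite with parts $X=\{v:d_G(v)>2k\}$ and $Y$ the low-degree side. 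The genuinely new point is the lifting step: given a $k$-trail of $G/e$ recorded as an even subgraph $H'$, I split the contracted vertex $w$ back into its preimages $u,v$; since $d_{H'}(w)$ is even, the inherited degrees of $u$ and $v$ share parity, and restoring evenness and reconnecting the two preimages costs at most the single edge $e=uv$, which is available precisely because a trail forbids only repetition, not use, of $e$.

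On the bipartite core I would count with Euler's formula exactly as in the $k$-walk theorem: for a cut $S\subseteq X$ the simple bipartite quotient obtained by contracting the components of $G\setminus S$ embeds on the same surface and is triangle-free, so $|E|\le 2|V|-2\chi$, giving $3\omega(G\setminus S)\le 2(|S|+\omega(G\setminus S))-2\chi$ and hence a toughness inequality $\omega(G\setminus S)\le(k_\chi-\tfrac12)|S|+O(1)$ for a suitable $k_\chi=O(|\chi|)$. Feeding this into the trail counterpart of Lemma~\ref{lem:walk:tough} from~\cite{II}, the only work left is the boundary case, where I must splice each fragment of $G\setminus S$ onto a cycle through $S$ using \emph{distinct} edges; here $4$- (resp.\ $5$-) connectivity guarantees at least four (resp.\ five) edges between every fragment and the rest, giving enough parallel connections to close up an even subgraph without repeating an edge. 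Since every vertex of $Y$ has degree $4$ or $5$ and is therefore visited at most $2\le k_\chi$ times, the resulting spanning closed trail is a genuine $k_\chi$-trail, contradicting the choice of $G$.

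The main obstacle is the interaction between the even-degree (parity) requirement and the prohibition on repeating edges. For walks one simply duplicates boundary edges to force the Eulerian condition; for trails each parity correction must be paid with a brand-new edge, and that edge flips the parity of a second vertex, so corrections can propagate along a $T$-join whose edges may pile up on a large-degree vertex and breach the bound $d_H(v)\le 2k_\chi$. Bounding this propagation is exactly where $4$-connectivity appears too weak: with minimum degree $5$ each fragment retains a spare connection after a minimal cut is consumed, which absorbs the corrections, whereas at connectivity $4$ the budget is tight and a fragment attached by exactly four edges can force an unavoidable overload. Removing this last deficiency, i.e.\ pushing the argument from $5$- down to $4$-connectivity, is the crux of the conjecture as stated, and is where I expect a new idea to be required.
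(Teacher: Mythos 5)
You are proving a statement that the paper itself leaves open: Conjecture~\ref{conj;2} is not proved in the paper, which only verifies the $5$-connected case (the theorem of Section~\ref{sec:trails}), and your proposal, by your own closing admission, also only reaches $5$-connectivity. So as a proof of the conjecture it has an essential, acknowledged gap, not a repairable technicality. Moreover, even your sketch of the $5$-connected case does not match the argument that works, in ways that matter. Your lifting step fails for trails: a $k$-trail of $G/e$, recorded as an even subgraph, may pass through the contracted vertex $w$ up to $k$ times, and each passage can pair an edge at $u$ with an edge at $v$; splitting $w$ back then requires up to $k$ crossings between the two preimages, while the single edge $uv$ may be used only once, and duplicating it --- the cure available for walks in the proof of Theorem~\ref{thm:minor-minimal} --- is exactly what trails forbid. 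This is why the paper makes no attempt to carry the bipartite-core (minor-minimal) reduction over to trails at all.

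Your Euler-formula count is also the wrong one for this problem. You bound $\omega(G\setminus S)$ via the triangle-free estimate $|E|\le 2|V|-2\chi$, which feeds Lemma~\ref{lem:walk:tough}; but the trail tool from~\cite{II} is Lemma~\ref{lem:trail:tough}, whose hypothesis is an inequality on $\Omega(G\setminus S)$ --- one contracts the \emph{$2$-tree-connected components} of $G\setminus S$, not its components --- with the correction term $-\frac{1}{2}e_G(S)$, and, crucially, with no alternative clause analogous to condition~2 of Lemma~\ref{lem:walk:tough}; so your planned boundary-case splicing of fragments onto a cycle through $S$ has nowhere to plug in. The paper's actual proof applies Lemma~\ref{lem:trail:tough} directly, with no minor-minimal preprocessing: $5$-connectivity is used twice, first to get $|S|\ge 2$ (since $G-v$ is $4$-edge-connected, hence $2$-tree-connected), and second to give minimum degree $5$ in the quotient $H$, whence $5|S'|-e_H(S')\le |E(H)|-e_H(S)\le 3|V(H)|-3\chi-e_G(S)$ and therefore $\Omega(G\setminus S)=|S'|-\frac{1}{2}e_H(S')\le \frac{3}{2}|S|-\frac{3}{2}\chi-\frac{1}{2}e_G(S)$. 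Run the same computation with minimum degree $4$ and the left-hand side degrades to $|S'|-e_H(S')$, which undershoots $\Omega(G\setminus S)$ by $\frac{1}{2}e_H(S')$, an unbounded quantity: that is the precise quantitative form of the ``tight budget'' you describe informally, and it confirms your closing diagnosis that $4$-connectivity needs a genuinely new idea. In short, your instincts about where the difficulty lies agree with the paper, but the proposal neither proves the conjecture nor correctly reconstructs the $5$-connected special case.
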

We shall below prove Conjecture~\ref{conj;2} for $5$-connected graphs.
For this purpose, we require the following  recent result which gives a sufficient condition for the existence of $k$-trails.
\begin{lem}{\rm(\cite{II})}\label{lem:trail:tough}
{Let $G$ be a graph and let $k$ be a positive integer.
 If for all $S\subseteq V(G)$, 
$$\Omega(G\setminus S)\le  (k-\frac{1}{2})|S|+1-\frac{1}{2}e_G(S),$$
then $G$ admits a $k$-trail.
}\end{lem}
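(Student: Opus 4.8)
The plan is to recast the statement as a degree-bounded connected even-subgraph problem and then run a Tutte/toughness-type deficiency argument, in parallel with Lemma~\ref{lem:walk:tough}. First I would observe that a $k$-trail of $G$ is precisely an Eulerian circuit of a connected spanning subgraph $H\subseteq G$ (using each edge at most once) all of whose degrees are even, the number of visits to $v$ being $\tfrac12\deg_H(v)$; hence finding a $k$-trail is equivalent to finding a connected spanning subgraph $H$ with $2\le \deg_H(v)\le 2k$ for every $v$. The role of $\Omega$ then becomes transparent: contracting each $2$-tree-connected component of $G$ yields a reduced graph carrying exactly the data $|P|-\tfrac12 e_G(P)$, and each such component, having two edge-disjoint spanning trees, is supereulerian and can be traversed internally as a closed trail with no constraint imposed from outside. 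Thus only the reduced graph, measured by $\Omega$, can obstruct the construction.

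Second, I would check that the hypothesis at $S=\varnothing$, namely $\Omega(G)\le 1$, already forces $G$ to be $2$-tree-connected. Indeed a reduced graph on $p\ge 2$ vertices has fewer than $2(p-1)$ edges: otherwise one peels off a violating Nash--Williams partition and recurses on an edge-dense part, eventually exhibiting a nontrivial $2$-tree-connected subgraph, a contradiction. Consequently $\Omega\ge\tfrac32$ for any reduced graph with at least two vertices, so $\Omega(G)\le1$ leaves $p=1$. This pins down the base case: $G$ is supereulerian, some spanning closed trail exists, and the entire content of the lemma lies in forcing the maximum degree down to $2k$. The conditions for nonempty $S$ are exactly what achieve this, and I would argue the contrapositive, assuming $G$ has no $k$-trail and extracting a ``barrier'' set $S$ for which the inequality fails, via a deficiency version of a degree-bounded connected-even-subgraph existence theorem.

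The main obstacle is that parity together with the upper degree bound alone can be handled by Lov\'asz-type $(g,f)$-parity factor theory, whereas connectivity cannot; the connectivity must be supplied by the tree-packing structure encoded in $\Omega$, and reconciling the two simultaneously is the crux. Concretely, the hard step is to show that whenever the $\Omega$-inequality holds for every $S$, the $2$-tree-connected components of $G\setminus S$ can be linked through $S$ into a single even spanning subgraph with no vertex of $S$ exceeding degree $2k$. This is where the precise shape of the bound is forced: the slope $k-\tfrac12$ accounts for the at-most-$k$ visits together with the parity cost at each vertex of $S$, the additive $1$ for closing up the global circuit, and the correction $-\tfrac12 e_G(S)$ for edges internal to $S$, which (unlike the independent set of Lemma~\ref{lem:walk:tough}) may be present and can be spent on connections. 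Matching these coefficients against the deficiency produced by the worst barrier, and verifying the extremal configuration in which equality is approached, is the delicate bookkeeping I expect to occupy the bulk of the argument.
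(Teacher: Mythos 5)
First, a point of comparison: the paper does not prove Lemma~\ref{lem:trail:tough} at all --- it is quoted from~\cite{II} and used as a black box --- so the only question is whether your proposal stands on its own as a proof, and it does not. Your preliminary observations are sound: a $k$-trail is the same thing as a spanning connected even subgraph with all degrees between $2$ and $2k$; the reduced graph obtained by contracting the $2$-tree-connected components contains no nontrivial $2$-tree-connected subgraph, hence has at most $2p-3$ edges on $p\ge 2$ vertices, which gives $\Omega\ge \frac{3}{2}$ for any nontrivial reduction and so $\Omega(G)\le 1$ exactly when $G$ is $2$-tree-connected. But the entire content of the lemma is the step you defer as ``delicate bookkeeping'': converting the family of inequalities over all $S$ into an actual even, connected, degree-bounded spanning subgraph. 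Your stated plan --- ``argue the contrapositive, assuming $G$ has no $k$-trail and extracting a barrier set $S$ \ldots via a deficiency version of a degree-bounded connected-even-subgraph existence theorem'' --- is circular: such a deficiency theorem \emph{is} the lemma in contrapositive form, and as you yourself concede, no $(g,f)$-parity factor theorem can supply it, because connectivity is not a factor-type condition. No mechanism for producing the barrier from the nonexistence of a $k$-trail is ever exhibited, and that mechanism is the whole difficulty.

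Second, there is a concrete flaw in your reduction, independent of the missing barrier argument. You assert that each $2$-tree-connected component ``can be traversed internally as a closed trail with no constraint imposed from outside,'' and accordingly you only propose to control degrees at vertices of $S$. But a $k$-trail must meet \emph{every} vertex at most $k$ times, including vertices inside the components. If the trail enters a component $C$ along many cross edges all incident to a single vertex $v\in C$, or if the internal routing needed to restore parity and connectivity concentrates at $v$, the bound fails at $v$. What is true (Catlin) is that a $2$-tree-connected graph is collapsible, so it can absorb any prescribed parity by a spanning connected subgraph; but collapsibility gives no degree control whatsoever, and a degree-bounded strengthening of it is a genuinely nontrivial result --- it is precisely the kind of statement that~\cite{II} and its predecessor are devoted to proving. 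Without that ingredient, even if one granted your barrier extraction on the contracted graph, expanding the contracted vertices back into $G$ need not yield a $k$-trail. So the proposal, as written, is a correct framing of the problem together with a list of the obstacles, not a proof.
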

Now, we are ready to prove the main result of this section.
\begin{thm}
{Every  $5$-connected graph $G$ which embeds on a surface with  Euler characteristic $\chi  \le 0$ 
admits a  $\lceil \frac{6-3\chi}{4} \rceil$-trail.
}\end{thm}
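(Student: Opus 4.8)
The plan is to mirror the structure of the preceding $\chi\le 0$ theorem for $3$-connected graphs, but now working with the trail criterion of Lemma~\ref{lem:trail:tough} rather than the walk criterion of Lemma~\ref{lem:walk:tough}. Set $k=\lceil \frac{6-3\chi}{4}\rceil$. First I would reduce to a combinatorial density estimate: for an arbitrary $S\subseteq V(G)$, I need to bound $\Omega(G\setminus S)$, recalling that $\Omega(G\setminus S)=|P|-\tfrac12 e_{G\setminus S}(P)$ where $P$ is the partition of $V(G\setminus S)$ into $2$-tree-connected components. The key point is that $|P|$ (the number of $2$-tree-connected components of $G\setminus S$) plays the role that $\omega(G\setminus S)$ played in the walk proof, and a $2$-tree-connected component, having two edge-disjoint spanning trees, is in particular $2$-edge-connected and hence not too sparse.

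The main computation would proceed by contracting each $2$-tree-connected component of $G\setminus S$ to a single vertex to form an auxiliary graph $H$ embedded on the same surface, so that $V(H)$ consists of $S$ together with the $|P|$ contracted component-vertices. Here the $5$-connectivity enters: after contraction each component-vertex retains at least $5$ incident edges to the rest of $H$ (since separating a component from $G$ requires cutting at least $5$ edges), so $5|P|\le \sum\deg_H\le 2|E(H)|$. Euler's formula for a graph on a surface of Euler characteristic $\chi$ gives $|E(H)|\le 3|V(H)|-3\chi$ (using girth $\ge 3$ now, rather than the triangle-free bound used before), whence $5|P|\le 2\big(3(|S|+|P|)-3\chi\big)$. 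Solving this linear inequality isolates $|P|\le 6|S|-6\chi$, which I would then massage — exactly as in the $3$-connected proof — into the required shape $\Omega(G\setminus S)\le (k-\tfrac12)|S|+1-\tfrac12 e_G(S)$, absorbing the $-\tfrac12 e_{G\setminus S}(P)$ correction and the $e_G(S)$ term, and checking that $k=\lceil\frac{6-3\chi}{4}\rceil$ is precisely the threshold that makes the slope work out.

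The hard part will be the boundary case where the density inequality is tight, analogous to the $|S|=3$, $\omega(G\setminus S)=6-2\chi$ case handled by hand in the walk proof. When the Euler bound is met with equality one gets a triangulation-like configuration with a small cut $S$ and many components each attached by the minimum number of edges; in that regime the crude inequality does not by itself deliver the $\Omega$-bound, and I expect to need a direct construction showing $G$ has a spanning closed trail meeting each vertex of $S$ at most $k$ times — exploiting $5$-connectivity to find a cycle through $S$ and then threading the extra components onto it while keeping edge-multiplicities controlled so the trail remains a trail (no repeated edges). Getting the parity and the distribution of the $O(-\chi)$ leftover components across the vertices of $S$ to respect the $k$-trail bound simultaneously is the delicate step, since a trail, unlike a walk, may not reuse edges.

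Once the estimate is established for all $S$ (using the tight-case construction exactly where the generic bound fails), Lemma~\ref{lem:trail:tough} yields a $k$-trail directly, completing the proof.
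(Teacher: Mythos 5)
Your high-level skeleton coincides with the paper's proof (apply Lemma~\ref{lem:trail:tough}, contract the $2$-tree-connected components of $G\setminus S$ to get a simple graph $H$ embedded on the same surface, and combine Euler's formula with the degree bound coming from $5$-connectivity), but the central computation as you set it up is broken. You bound the degree sum of the contracted vertices by the full handshake identity, $5|P|\le \sum_{v\in V(H)}\deg_H(v)\le 2|E(H)|\le 2\bigl(3(|S|+|P|)-3\chi\bigr)$. Here the coefficient of $|P|$ on the right is $6$, which exceeds the $5$ on the left, so this rearranges to $-|P|\le 6|S|-6\chi$: a vacuous statement giving no upper bound on $|P|$ whatsoever. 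Your claimed conclusion $|P|\le 6|S|-6\chi$ does not follow from it, and even if it did it would be far too weak: the target is $\Omega(G\setminus S)\le (k-\frac12)|S|+1-\frac12 e_G(S)$ with $k-\frac12\approx \frac32-\frac{3\chi}{4}$, so for $|S|=2$ you would need roughly $4-\frac{3\chi}{2}$, not $12-6\chi$. The repair is exactly what the paper does: sum degrees over the contracted vertices $S'=V(H)\setminus S$ \emph{only}, so that $S$--$S'$ edges are counted once instead of twice, namely $5|S'|\le \sum_{v\in S'}\deg_H(v)=|E(H)|-e_H(S)+e_H(S')$. Now $|E(H)|$ enters with coefficient $1$, and Euler's bound $|E(H)|\le 3(|S|+|S'|)-3\chi$ leaves $2|S'|-e_H(S')\le 3|S|-3\chi-e_G(S)$; the left side is precisely $2\Omega(G\setminus S)$ (the $e_H(S')$ term is the correction built into $\Omega$, not something to be ``absorbed''), so $\Omega(G\setminus S)\le \frac32|S|-\frac{3\chi}{2}-\frac12 e_G(S)$, which has the right slope. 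To finish one also needs $|S|\ge 2$, which the paper gets by noting that $G-v$ is $4$-edge-connected, hence $2$-tree-connected by Nash-Williams/Tutte, so $\Omega(G-v)=1$ for every single vertex $v$; with $|S|\ge 2$ and the ceiling in $k=\lceil\frac{6-3\chi}{4}\rceil$, the additive term $-\frac{3\chi}{2}$ fits under $(k-\frac12)|S|+1$.

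Your third paragraph also rests on a misreading of Lemma~\ref{lem:trail:tough}. Unlike Lemma~\ref{lem:walk:tough}, the trail criterion has no disjunctive escape clause: it demands the inequality for \emph{every} $S\subseteq V(G)$, and does not permit you to substitute, for some exceptional $S$, a hand-built spanning closed trail meeting $S$ few times. So the contingency plan you describe for a ``tight case'' could not be executed inside the stated lemma; if the counting bound genuinely failed for some $S$, the method would simply not apply. Fortunately no such case arises: with the corrected single-counting and $|S|\ge 2$, the required (non-strict) inequality holds for all $S$ --- equality can occur at $|S|=2$, but that is still admissible --- which is exactly why this $5$-connected trail theorem, in contrast to the $3$-connected walk theorem, admits a purely computational proof.
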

%
%
\begin{proof}
{Let $S\subseteq V(G)$ with $\Omega(G\setminus S) > 1$. 
For each vertex $v$,  we have $\Omega(G- v)= 1$, 
because the graph $G-v$ is  $2$-tree-connected~\cite{MR0133253,MR0140438}.
Thus $|S|\ge 2$.
Define 
$H$ 
to be  the simple graph  obtained  form $G$ by contracting the $2$-tree-connected components of $G\setminus S$.
Set $S'=V(H)\setminus S$.
Note that 
$|V(H)|= |S|+|S'|\ge 3$ 
and
 $H$
 can be embedded on the surface as $G$ is embedded.
By Euler's formula, it is easy to check that  
 $|E(H)| \le 3|V(H)|-3\chi$.
Note also that there is no a  pair of edges of $G$ joining two different $2$-tree-connected components of $G\setminus S$.
Since $G$ is $5$-connected,  the minimum degree of $H$ is at least $5$. Thus 
$$5|S'|\le |E(H)|-e_H(S)+e_H(S'),$$
and so
$$5|S'|-e_H(S') \le |E(H)| -e_H(S)\le  3|V(H)|-3\chi-e_H(S)=   3(|S'|+|S|)-3\chi-e_G(S).$$
Therefore,
$$\Omega(G\setminus S)
= |S'|-\frac{1}{2}e_H(S')\le \frac{3}{2}|S|+\frac{-3\chi}{2} -\frac{1}{2}e_G(S).$$
and so
$$\Omega(G\setminus S) \le (\frac{3}{2}+\frac{-3\chi/2-1}{2})|S|+1-\frac{1}{2}e_G(S)
 \le (\lceil \frac{6-3\chi}{4}\rceil -\frac{1}{2})|S|+1-\frac{1}{2}e_G(S) .$$
Hence the theorem follows from  Lemma~\ref{lem:trail:tough} with $k=\lceil \frac{6-3\chi}{4}\rceil $.
}\end{proof}
%
%
The following result improves Corollary 2 in~\cite{MR1289970}, which 
says that every $5$-connected triangulation  in the double  torus with  representativity
at  least $6$  admits  a  $4$-trail. Note that  the Euler characteristic of the double torus is $-2$.
\begin{cor}
{Every  $5$-connected graph $G$ which embeds on the double torus admits a  $3$-trail.
}\end{cor}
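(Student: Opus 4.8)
The plan is to obtain this corollary as an immediate specialization of the theorem just proved. First I would recall that the double torus is the orientable surface of genus $2$, whose Euler characteristic is $\chi = 2 - 2g = 2 - 4 = -2$. In particular $\chi = -2 \le 0$, so the hypotheses of the preceding theorem are satisfied for every $5$-connected graph $G$ embeddable on this surface.

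It then remains only to evaluate the degree bound at $\chi = -2$. Substituting yields $\lceil \frac{6 - 3\chi}{4}\rceil = \lceil \frac{6 + 6}{4}\rceil = \lceil 3 \rceil = 3$, so the theorem directly guarantees that $G$ admits a $3$-trail. Since this is a purely numerical specialization, I expect essentially no obstacle: the only things to confirm are the Euler characteristic of the double torus and the elementary arithmetic in the ceiling. All of the genuine work—reducing the existence of a $k$-trail to the toughness-type inequality $\Omega(G\setminus S)\le (k-\tfrac12)|S|+1-\tfrac12 e_G(S)$ and verifying it via Euler's formula and $5$-connectivity—is already carried out in the preceding theorem and in Lemma~\ref{lem:trail:tough}, so the corollary is simply a statement of the case $\chi=-2$ in concrete geometric language.
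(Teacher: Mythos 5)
Your proposal is correct and is exactly the paper's (implicit) argument: the corollary is the specialization $\chi=-2$ of the preceding theorem, since the double torus has Euler characteristic $-2$ and $\bigl\lceil \frac{6-3(-2)}{4}\bigr\rceil = 3$. Nothing further is needed.
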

%
%
%
%
%
%
%
%
%
%
%
%
%
%
%

%

\end{document}